\DeclareSymbolFont{AMSb}{U}{msb}{m}{n}
\DeclareMathSymbol{\N}{\mathbin}{AMSb}{"4E}
\DeclareMathSymbol{\Z}{\mathbin}{AMSb}{"5A}
\DeclareMathSymbol{\R}{\mathbin}{AMSb}{"52}
\DeclareMathSymbol{\Q}{\mathbin}{AMSb}{"51}
\DeclareMathSymbol{\I}{\mathbin}{AMSb}{"49}
\DeclareMathSymbol{\C}{\mathbin}{AMSb}{"43}
\DeclareFontFamily{U}{mathx}{\hyphenchar\font45}
\DeclareFontShape{U}{mathx}{m}{n}{<-> mathx10}{}
\DeclareSymbolFont{mathx}{U}{mathx}{m}{n}
\DeclareMathAccent{\widebar}{0}{mathx}{"73}
\def\lim{\mathop{\rm lim}\nolimits}
\newcommand{\res}[1]{\hspace{-0.6mm}\downarrow_{\hspace{-0.25mm}{#1}}}
\newcommand{\tn}{\textnormal}
\newcommand{\F}{\mathbb{F}}
\newcommand{\iso}{\cong}
\numberwithin{equation}{section}
\newtheorem{theorem}{Theorem}[section]
\newtheorem{lemma}[theorem]{Lemma}
\theoremstyle{remark}
\newtheorem{remark}[theorem]{Remark}
\theoremstyle{definition}
\newtheorem{defn}[theorem]{Definition}
\newtheorem{example}[theorem]{Example}
\title{Butler's method applied to $\Z_p[C_p\times C_p]$-permutation modules}
\author[a]{John W.\ MacQuarrie}
\author[a]{Marlon Stefano}
\affil[a]{Universidade Federal de Minas Gerais, Belo Horizonte, MG, Brazil}
\begin{document}

\footnotetext{\textit{Email addresses:} john@mat.ufmg.br (John MacQuarrie), mestanislau@ufmg.br (Marlon Stefano)}

\maketitle

\begin{abstract}
Let $G$ be a finite $p$-group with normal subgroup $N$ of order $p$.  The first author and Zalesskii have previously given a characterization of permutation modules for $\Z_pG$ in terms of modules for $G/N$, but the necessity of their conditions was not known.  We apply a correspondence due to Butler to demonstrate the necessity of the conditions, by exhibiting highly non-trivial counterexamples to the claim that if both the $N$-invariants and the $N$-coinvariants of a given lattice $U$ are permutation modules, then so is $U$.  
\end{abstract}

\section{Introduction}

Let $R$ be a complete discrete valuation ring whose residue field has characteristic $p$ ($R$ will almost always be $\Z_p$ or $\F_p$) and let $G$ be a finite $p$-group.  An $RG$-module is a \emph{lattice} if it is free of finite $R$-rank.  An $RG$-lattice is a \emph{permutation module} if it possesses an $R$-basis that is set-wise preserved by multiplication from $G$.  Given an $RG$-module $U$ and a normal subgroup $N$ of $G$, the \emph{$N$-invariants} of $U$, $U^N$, is the largest submodule of $U$ on which $N$ acts trivially, and the \emph{$N$-coinvariants} of $U$, $U_N$, is the largest quotient module of $U$ on which $N$ acts trivially.

\medskip

In \cite{MacquarrieZalesskiiDM}, extending the famous sufficient condition of Weiss \cite[Theorem 2]{WeissAnnals} to a characterization, the first author and Pavel Zalesskii proved the following:

\begin{theorem}\label{DM theorem}
Let $G$ be a finite $p$-group, $U$ a $\Z_pG$-lattice and $N$ a normal subgroup of $G$ of order $p$.  Then $U$ is a permutation module if, and only if
\begin{enumerate}
    \item\label{condition inv and coinv} $U^N$ and $U_N$ are permutation $\Z_p[G/N]$-modules and 
    \item $(U/U^N)_N$ is a permutation $\F_p[G/N]$-module.
\end{enumerate}
\end{theorem}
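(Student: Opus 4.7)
The plan is to prove the two implications separately. The forward (necessity) direction will be a direct unwinding of a given permutation basis, while the backward (sufficiency) direction is the substantive content.

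For necessity, suppose $U$ has a $G$-invariant $\Z_p$-basis $B$. Since $|N|=p$, every $N$-orbit on $B$ has size $1$ or $p$; partition $B=B_1\sqcup B_p$ accordingly. Setting $\sigma_O=\sum_{b\in O}b$ for each $N$-orbit $O\subset B_p$, the set $B_1\cup\{\sigma_O:O\in B_p/N\}$ is a $G/N$-permuted $\Z_p$-basis of $U^N$, and the images of $B_1\cup(B_p/N)$ form a $G/N$-permuted $\Z_p$-basis of $U_N$. For the third claim, the $\Z_pN$-span of a single orbit $O\subset B_p$ is isomorphic to $\Z_pN$, so its image in $U/U^N$ is $\Z_pN/\Z_p\sigma_O\cong\Z_p[\zeta_p]$, whose $N$-coinvariants are $\F_p$; hence $(U/U^N)_N$ has an $\F_p$-basis indexed by $B_p/N$ and permuted by $G/N$.

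For sufficiency, the strategy is induction on the $\Z_p$-rank of $U$. A first reduction uses the classification of indecomposable $\Z_pN$-lattices (namely $\Z_p$, $\Z_p[\zeta_p]$, and $\Z_pN$): since $(\Z_p[\zeta_p])_N\cong\F_p$ would contribute $\Z_p$-torsion to $U_N$, the hypothesis that $U_N$ is a permutation $\Z_p[G/N]$-module (hence a lattice) forces $U\res{N}\cong\Z_p^a\oplus\Z_pN^c$, which is already a permutation $\Z_pN$-module. To upgrade this to a $\Z_pG$-permutation structure, one selects a $G/N$-orbit $\overline{O}$ in the given permutation basis of $(U/U^N)_N$, lifts it through $U\twoheadrightarrow U/U^N\twoheadrightarrow(U/U^N)_N$, and argues that the $G$-orbit of a suitable lift, combined with a matched orbit from the permutation basis of $U^N$, spans a transitive $\Z_pG$-permutation direct summand of $U$. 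One then verifies that the three hypotheses descend to the complementary summand and the induction closes.

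The main obstacle is arranging that the lift of $\overline{O}$ can be chosen so that its $G$-orbit generates a \emph{direct} summand of $U$ (not merely a pure sublattice). This is precisely the step at which condition \eqref{condition inv and coinv} alone is insufficient and the hypothesis on $(U/U^N)_N$ becomes essential: the extra datum controls the interaction between the extension $0\to U^N\to U\to U/U^N\to 0$ and the $N$-coinvariants of the quotient, which in turn governs the possibility of splitting off such a summand. The present paper confirms, by constructing explicit counterexamples via Butler's correspondence, that the hypothesis on $(U/U^N)_N$ cannot be removed.
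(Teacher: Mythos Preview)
This theorem is not proved in the present paper: it is quoted from \cite{MacquarrieZalesskiiDM} (with the sufficiency direction going back to Weiss \cite{WeissAnnals}), and the paper's contribution is rather to show that Condition~2 cannot be dropped. So there is no ``paper's own proof'' to compare against; I can only assess your argument on its merits.

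Your necessity direction is fine and is the standard computation.

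Your sufficiency direction, however, is a sketch with its central step left open. You correctly reduce to the case $U\res{N}\cong\Z_p^a\oplus(\Z_pN)^c$, and your plan---lift a $G/N$-orbit from $(U/U^N)_N$ and split off a transitive permutation summand---is reasonable in spirit. But you explicitly flag ``the main obstacle'' (arranging that the lifted orbit generates a \emph{direct} summand) and then do not resolve it: saying that Condition~2 ``controls the interaction'' and ``governs the possibility of splitting off such a summand'' is a description of what needs to happen, not an argument that it does. Concretely, you have not shown how the permutation structure on $(U/U^N)_N$ lets you adjust a lift so that the resulting $\Z_pG$-submodule is complemented, nor have you verified that the three hypotheses are inherited by the complement. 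You also need to handle the base of the induction, i.e.\ the case $c=0$ where $U=U^N$, and the case $a=0$ separately or explain why they are subsumed.

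For context, Weiss's original argument for the sufficient condition is not of this inductive ``peel off a summand'' type; it proceeds via a more global analysis. If you want to push your approach through, the missing ingredient is a genuine splitting lemma: given the hypotheses, produce an explicit $G$-equivariant idempotent (or a complementary sublattice) for the sublattice generated by your lifted orbit. Until that is supplied, the proposal is an outline rather than a proof.
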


Simple examples are given in \cite{MacquarrieZalesskiiDM} showing that we cannot remove the conditions of Part 1.  It was however left open whether Condition 2 is redundant.  That is, whether $U$ is a permutation module if, and only if, both $U^N$ and $U_N$ are permutation modules.  We will show here that Condition 2 cannot be removed, by exhibiting examples of lattices $U$ for $\Z_p[C_p\times C_p]$ ($p$ odd) and for $\Z_2[C_2\times C_2\times C_2]$ for which both $U^N$ and $U_N$ are permutation modules, but $U$ is not.  The examples seem to be highly non-trivial.

We believe that the methods used to construct these examples are interesting in their own right, utilizing in a new way a wonderful correspondence due to Butler \cite{ButlerKlein4, ButlerAbelian}, wherein he associates to a lattice what he calls a ``diagram''.  For Butler, 
the idea was to analyze the \emph{category} of lattices for a given abelian $p$-group, using the correspondence to compare it with a category that is better understood.  The novelty of our approach is to use Butler's correspondence to analyze the structure of a \emph{given} lattice, by analyzing how prescribed properties of the lattice 
affect the corresponding diagram.  In particular, we will characterize the diagrams of those $\Z_p[C_p\times C_p]$-lattices $U$ for which $U$, $U^N$ and $U_N$ are permutation modules.  Using these characterizations, we find a diagram whose corresponding lattice $U$ has the properties that both $U^N$ and $U_N$ are permutation modules, but $U$ is not.  In the future, we believe that analyses of this type will yield further insights in the study of permutation modules.

\section{Preliminaries}

Notations introduced here will be used throughout our discussion.  In what follows, $G$ is always a finite abelian $p$-group and $N$ 

is a subgroup of $G$ of order $p$.

Given a $\Z_pG$-lattice $U$, the $\Z_p[G/N]$-modules of $N$-invariants $U^N$ and $N$-coinvariants $U_N$ are, respectively, the largest submodule and the largest quotient module of $U$ on which $N$ acts trivially.  Explicitly
\begin{align*}
    U^N & = \{u\in U\,|\, nu = u\,\,\forall n\in N\} \\
    U_N & = U/I_NU 
\end{align*}
where $I_N$, the augmentation ideal of $\Z_pN$, is the ideal generated by the kernel of the map $\Z_pN\to \Z_p$ sending $\sum_{n\in N} \lambda_nn$ to $\sum \lambda_n$.

By \cite[Theorem 2.6]{HellerReinerAnnals} there are three isomorphism classes of indecomposable $\Z_pN$-lattice, being the trivial module $\Z_p$, the free module $\Z_pN$ and a non-permutation module $S$ of $\Z_p$-rank $p-1$, which can be described in either of the following equivalent ways:
$$S = I_N = \Z_pN/(\Z_pN)^N.$$

\begin{lemma}[{\cite[Lemma 8]{MacquarrieZalesskiiDM}}]\label{lemma UresN perm iff N coinv lattice}
Let $H$ be a cyclic group of order $p$ and $U$ a $\Z_pH$-lattice.  Then $U_H$ is a lattice if, and only if, $U$ is a permutation module.
\end{lemma}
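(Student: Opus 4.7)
The plan is to invoke Heller and Reiner's classification of indecomposable $\Z_pH$-lattices recalled immediately above, reducing the statement to a computation on each of the three indecomposables $\Z_p$, $\Z_pH$, and $S$. Any $\Z_pH$-lattice decomposes as $U \cong \Z_p^a \oplus (\Z_pH)^b \oplus S^c$ for some non-negative integers $a,b,c$, and $U$ is a permutation module precisely when $c=0$. Since the coinvariants functor $(-)_H = \Z_p \otimes_{\Z_pH} (-)$ commutes with finite direct sums, we have $U_H \cong (\Z_p)_H^a \oplus (\Z_pH)_H^b \oplus S_H^c$, so the lemma reduces to showing that $(\Z_p)_H$ and $(\Z_pH)_H$ are $\Z_p$-lattices, while $S_H$ has $p$-torsion.

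The first two computations are immediate: $(\Z_p)_H = \Z_p$, and $(\Z_pH)_H \cong \Z_p$ via the augmentation map. The substantive step is the computation of $S_H$. Writing $h$ for a generator of $H$ and $N_H := 1 + h + \cdots + h^{p-1}$ for the norm element, the invariants $(\Z_pH)^H$ are spanned as a $\Z_p$-module by $N_H$, so the description $S = \Z_pH/(\Z_pH)^H$ given just above the statement gives a short exact sequence
\[
0 \longrightarrow \Z_p \cdot N_H \longrightarrow \Z_pH \longrightarrow S \longrightarrow 0.
\]
Applying the right-exact functor $(-)_H$, and using that $H$ acts trivially on $\Z_p \cdot N_H$, produces an exact sequence
\[
\Z_p \longrightarrow \Z_p \longrightarrow S_H \longrightarrow 0
\]
whose left map sends $N_H$ to its augmentation, namely $p$. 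Hence $S_H \cong \Z_p/p\Z_p \cong \F_p$, which is not a $\Z_p$-lattice.

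Putting this together, $U_H \cong \Z_p^{a+b} \oplus \F_p^c$, which is a lattice if and only if $c = 0$, if and only if $U$ is a permutation module. The only substantive step is the identification $S_H \cong \F_p$; everything else is essentially bookkeeping via Krull--Schmidt, and I do not foresee any serious obstacle. An entirely equivalent approach would compute $S_H$ directly from the $\Z_p$-basis $\{h-1, h^2-1, \ldots, h^{p-1}-1\}$ of $S = I_H$, or deduce it from the long exact sequence in Tate homology using $\widehat{H}_0(H, \Z_p) = \F_p$; I would favour the short exact sequence argument above for its brevity.
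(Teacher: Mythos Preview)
Your proof is correct. Note, however, that the paper does not actually prove this lemma: it is quoted verbatim from \cite[Lemma 8]{MacquarrieZalesskiiDM} with no argument given, so there is no in-paper proof to compare against. Your approach via the Heller--Reiner classification and the computation $S_H\cong\F_p$ from the short exact sequence $0\to\Z_p N_H\to\Z_pH\to S\to 0$ is clean and entirely standard; it is almost certainly what the cited reference does as well, since the classification is recalled in the paper precisely to make results like this one immediate.
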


If ever $H$ is a subgroup of $G$, we use the symbol $\widehat{H}$ to denote the element $\sum_{h\in H}h$ in $\Z_pG$.

\begin{lemma}\label{lemma multiply sums formulas}
Let $G = C_p\times C_p$ and $H,K$ subgroups of $G$.  In $\Z_pG$ we have $\widehat{H}\cdot \widehat{K} = |H\cap K|\cdot\widehat{HK}$.
\end{lemma}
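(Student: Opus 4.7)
The plan is to compute the product $\widehat{H}\cdot\widehat{K}$ directly from the definitions and then organize the resulting sum by the underlying group element.

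First I would expand
\[
\widehat{H}\cdot\widehat{K} \;=\; \Bigl(\sum_{h\in H} h\Bigr)\Bigl(\sum_{k\in K} k\Bigr) \;=\; \sum_{(h,k)\in H\times K} hk,
\]
which is a formal sum in $\Z_p G$. Since $G$ is abelian, $HK$ is a subgroup of $G$, and every term $hk$ lies in $HK$. Thus the right-hand side can be rewritten as $\sum_{g\in HK} c_g \cdot g$, where $c_g$ is the number of pairs $(h,k)\in H\times K$ with $hk=g$. The bulk of the argument is the short counting step: fix any one representation $g=h_0k_0$; then $(h,k)$ is another representation if and only if $h_0^{-1}h = k_0 k^{-1}$, an element that must lie in $H\cap K$. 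This sets up a bijection between the set of representations of $g$ and $H\cap K$, so $c_g = |H\cap K|$ for every $g\in HK$. Substituting gives
\[
\widehat{H}\cdot\widehat{K} \;=\; \sum_{g\in HK} |H\cap K|\cdot g \;=\; |H\cap K|\cdot\widehat{HK},
\]
as required.

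There is no real obstacle here; the only point worth flagging is that the hypothesis $G=C_p\times C_p$ is used merely to guarantee that $HK$ is a subgroup (so that $\widehat{HK}$ makes sense in the stated form), and in fact the same argument works in any abelian group. One could alternatively enumerate cases — $H\cap K$ is either trivial or all of $H=K$ since subgroups of $C_p\times C_p$ have order $1$, $p$, or $p^2$ — but the uniform counting argument above is cleaner and avoids the case analysis entirely.
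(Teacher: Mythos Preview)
Your argument is correct and is precisely the ``direct calculation'' the paper alludes to without spelling out. There is nothing to add: the expansion of $\widehat{H}\cdot\widehat{K}$ and the fiber count via $h_0^{-1}h = k_0 k^{-1}\in H\cap K$ is the standard way to do this, and your remark that abelianness (rather than the specific structure of $C_p\times C_p$) suffices is also accurate.
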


\begin{proof}
This follows by direct calculations.

\end{proof}

\section{Butler's Correspondence for $\Z_p[C_p\times C_p]$}

We describe the parts we require from \cite{ButlerAbelian} for the special case of $\Z_p[C_p\times C_p]$, which is what we require to construct examples for $p$ odd.  The case of $p=2$ must be treated separately, but the methods are similar and we only present the example, in Section \ref{section p equals 2 case}.

In what follows, $G = C_p\times C_p = N\times C$ 
.  We denote as usual by $\Q_p, \F_p$ the field of fractions and the residue field of the $p$-adic integers $\Z_p$, respectively.  The group algebra $\Q_pG = \Q_p\otimes_{\Z_p}\Z_pG$ has a decomposition 
$$\Q_pG = e_0\Q_pG \times \prod_{\substack{H\leqslant G \\ |H|=p}} {e_{H}\Q_pG},$$
where the $e_i$ are the $p+2$ primitive orthogonal idempotents of $\Q_pG$, given as
$$e_0 = \frac{1}{p^2}\widehat{G}\,,\quad e_H = \frac{1}{p^2}(p\widehat{H} - \widehat{G}).$$
Denote by $I$ the indexing set $\{0, H_1, \hdots, H_{p+1}\}$ of the primitive idempotents, where the $H_i$ are the subgroups of $G$ of order $p$.   
For each $i\in I$ we denote by $\Lambda_{(i)}$ both the $\Z_pG$-algebra $e_i\Z_pG$ and the free $e_i\Z_pG$-module of rank 1\footnote{we write $\Lambda_{(i)}$ rather than $\Lambda_i$  to avoid confusion with coinvariants.}.  Recall that $\Z_pG$ is a local algebra, with maximal ideal $I_G + p{\Z_pG}$.

\begin{lemma}
\label{lemma LambdaH facts}
Let $H, K$ be distinct subgroups of order $p$.
\begin{enumerate}
    \item 
    $\widehat{H}\cdot e_K = 0$,
    \item $\Lambda_{(K)}$ has no non-zero $H$-fixed points,
    \item $J(\Lambda_{(K)}) = I_H\Lambda_{(K)}$.
\end{enumerate}
\end{lemma}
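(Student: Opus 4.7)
Part 1 is the engine that drives Parts 2 and 3, and it follows from Lemma \ref{lemma multiply sums formulas} applied twice. Since $H \neq K$ are subgroups of order $p$, $H \cap K = 1$ and $HK = G$, so $\widehat{H}\widehat{K} = \widehat{G}$ and $\widehat{H}\widehat{G} = p\widehat{G}$. Substituting into $e_K = \frac{1}{p^2}(p\widehat{K} - \widehat{G})$ gives $\widehat{H}e_K = 0$.

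For Part 2, given an $H$-fixed $x \in \Lambda_{(K)}$, summing the equations $hx = x$ over $h \in H$ yields $\widehat{H}x = px$. On the other hand, since $e_K$ is the identity of $\Lambda_{(K)}$, Part 1 gives $\widehat{H}x = (\widehat{H}e_K)x = 0$. Hence $px = 0$, and $\Z_p$-torsion-freeness of $\Lambda_{(K)} \subseteq e_K\Q_pG$ forces $x = 0$.

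For Part 3, the ring homomorphism $\Z_pG \twoheadrightarrow \Lambda_{(K)}$, $\alpha \mapsto \alpha e_K$, exhibits $\Lambda_{(K)}$ as a quotient of the local ring $\Z_pG$, so $\Lambda_{(K)}$ is itself local with Jacobson radical equal to the image of $J(\Z_pG) = I_G + p\Z_pG$. Since $K$ acts trivially on $\Lambda_{(K)}$ (using $ke_K = e_K$ for $k \in K$), we have $I_K\Lambda_{(K)} = 0$; combined with $I_G = I_H + I_K$, this gives $J(\Lambda_{(K)}) = I_H\Lambda_{(K)} + p\Lambda_{(K)}$. It remains to show $p \in I_H\Lambda_{(K)}$. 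Fixing a generator $h$ of $H$ and using the polynomial identity $1 + x + \cdots + x^{p-1} = p + (x-1)q(x)$ for some $q(x) \in \Z_p[x]$, Part 1 says $\widehat{H} = 1 + h + \cdots + h^{p-1}$ annihilates $\Lambda_{(K)}$, so evaluating at $h$ gives $p = -(h-1)q(h) \in I_H\Lambda_{(K)}$. The main subtlety is this final cyclotomic step, which converts the annihilation supplied by Part 1 into an explicit expression for $p$ inside $I_H\Lambda_{(K)}$; the rest is bookkeeping about local rings and commuting idempotents.
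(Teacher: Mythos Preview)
Your proof is correct and follows essentially the same approach as the paper. The only notable difference is in Part~3: the paper obtains $p\in I_H\Lambda_{(K)}$ via the one-line identity $pe_K = pe_K - \widehat{H}e_K = \sum_{h\in H}(1-h)e_K$, whereas you reach the same conclusion through the polynomial identity $1+x+\cdots+x^{p-1}=p+(x-1)q(x)$; your version is a little more roundabout but equivalent, and your explicit discussion of why $J(\Lambda_{(K)})$ is the image of $I_G+p\Z_pG$ and why $I_K\Lambda_{(K)}=0$ fills in details the paper leaves implicit.
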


\begin{proof}
\begin{enumerate}
    \item By Lemma \ref{lemma multiply sums formulas},
    $$\widehat{H}\cdot e_K = \frac{1}{p^2}(p\widehat{H}\widehat{K} - \widehat{H}\widehat{G}) = \frac{1}{p^2}(p\widehat{G} - p\widehat{G}) = 0.$$
    \item $\widehat{H}$ acts as multiplication by $p$ on a fixed point of $\Lambda_{(K)}$, and so ${\Lambda_{(K)}}^H=0$ by Part 1.
    
    \item $p\Lambda_{(K)}\subseteq I_H\Lambda_{(K)}$ because by Part 1
    $$pe_K = pe_K - \widehat{H}e_K = \sum_{h\in H}(1-h)e_K\in I_H\Lambda_{(K)}.$$
\end{enumerate}
\end{proof}

\begin{defn}
The $\Z_pG$-lattice $U$ is \emph{reduced} if it has neither free direct summands nor summands isomorphic to any $\Lambda_i$.  
\end{defn}

\begin{defn}[{\cite[\S 2.2]{ButlerAbelian}}]
A \emph{diagram} for $C_p\times C_p$ is a $(p+3)$-tuple $V_* = (V; V_{(i)}\,,\, i\in I)$, where 
\begin{itemize}
    \item $V$ is a finitely generated $\F_p[C_p\times C_p]$-module,
    \item the $V_{(i)}$ are $\F_p[C_p\times C_p]$-submodules of $V$, \item for each $j\in I$ we have $\sum_{i\neq j}V_{(i)} = V$,
    \item for each $i\in I$, the induced action of $\Z_p[C_p\times C_p]$ on $V_{(i)}$ factors through the canonical surjection $\Z_p[C_p\times C_p]\to \Lambda_{(i)}$ sending $1$ to $e_i$. 
\end{itemize}
A \emph{map of diagrams} $V_*\to V'_*$ is an $\F_p[C_p\times C_p]$-module homomorphism $\rho:V\to V'$ such that $\rho(V_{(i)})\subseteq V'_{(i)}$ for each $i\in I$.
\end{defn}

We make explicit how one obtains a diagram from a reduced lattice, and vice-versa, fixing as we do some notation that will remain in force throughout the rest of the article.  As we follow Butler's arguments in \cite{ButlerAbelian}, simply specialising to the case of $\Z_p[C_p\times C_p]$ (which simplifies things slightly), we don't give any justifications.

\begin{itemize}
    \item \textbf{Getting a diagram from a reduced lattice:} Let $U$ be a reduced lattice.  We regard $U$ as the sublattice $1\otimes U$ of the $\Q_pG$-module $\Q_pU = \Q_p\otimes_{\Z_p}U$.  Defining for each $i\in I$ the submodule $U_{(i)} := e_iU$ of $\Q_pU$, the sum 
$$U_* := \sum_{i\in I} U_{(i)}$$
is direct, and since $U$ is reduced we have
$$pU_*\subsetneq U\subsetneq U_*.$$
Defining
$$V = U_*/U\,,\quad V_{(i)} = (U_{(i)} + U)/U,$$
the tuple $V_* = (V ; V_{(i)}\,,\, i\in I)$ is the diagram corresponding to $U$.

\item \textbf{Getting a reduced lattice from a diagram:} Let $V_* = (V ; V_{(i)}\,,\, i\in I)$ be a diagram.  For each $i\in I$, let $F_{(i)}$ be the direct sum of $\textnormal{dim}_{\F_p}(V_{(i)}/\tn{Rad}(V_{(i)}))$ copies of $\Lambda_{(i)}$ and denote by $F$ the direct sum $\bigoplus_{i\in I}F_{(i)}$.  For each $i$ let $f_{i} : F_{(i)}\to V_{(i)}$ be a surjective homomorphism of $\Lambda_{(i)}$-modules inducing an isomorphism $F_{(i)}/\textnormal{Rad}(F_{(i)})\to V_{(i)}/\textnormal{Rad}(V_{(i)})$.  We thus obtain for each $i$ a map $F_{(i)}\xrightarrow{f_{i}}V_{(i)}\hookrightarrow V$ and hence by summing these maps, a map $f : F\to V$.  The reduced lattice corresponding to $V_*$ is $\textnormal{Ker}(f)$.
\end{itemize}

These processes are mutually inverse in the sense that if $U'$ is the reduced lattice obtained from the diagram of $U$, then $U'\iso U$ as $\Z_pG$-lattices, and if $V'_*$ is the diagram coming from the reduced lattice associated to $V_*$, then $V'_*\iso V_*$.

\medskip

It follows that given a reduced lattice $U$, we may associate a unique (up to isomorphism) diagram $V_*$ and from this, the modules $F_{(i)}, F$, with $U$ being considered as a submodule of $F$.  We will use these notations freely in what follows.

\section{Characterizations using diagrams}

Throughout this section, $U$ is a reduced $\Z_pG$-lattice with diagram $V_* = (V; V_{(i)}\,,\,i\in I)$ and $F_{(i)}, F$ are the corresponding modules defined in the previous section.  Given an element $x\in F$, we denote by $\overline{x}$ its image in $V$.  Throughout, the letters $H,K$ will always refer to subgroups of order $p$.  If an element of $\bigoplus F_{(i)}$ is written like $x = \sum_{H\neq N}x_H$, this should be taken to mean that $x\in \bigoplus_{\substack{H\leqslant G \\ |H|=p\\H\neq N}} F_{(H)}$ with each $x_H\in F_{(H)}$, and similarly in $V$.  

\begin{theorem}\label{prop U perm char}
The reduced lattice $U$ is a permutation module if, and only if, 
$G$ acts trivially on each $V_{(i)}$ and $V = V_{(0)} = \bigoplus_{\substack{H\leqslant G \\ |H|=p}}V_{(H)}$.
\end{theorem}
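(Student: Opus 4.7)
My plan is to handle both directions by reducing to the building-block case $U = \Z_p[G/H]$. Two observations make this possible. First, by Krull--Schmidt together with the fact that $\Lambda_{(0)} \iso \Z_p$, any reduced permutation $\Z_pG$-lattice is a direct sum of copies of $\Z_p[G/H]$ as $H$ ranges over the subgroups of $G$ of order $p$ (the only other indecomposable transitive permutation modules are $\Z_p$ and $\Z_pG$, both forbidden by ``reduced''). Second, the Butler correspondence manifestly respects direct sums: the idempotents $e_i$ act diagonally on a direct sum, so $U_{(i)}$, $U_*$, $V$ and each $V_{(i)}$ all split correspondingly, and likewise the kernel of the map $f:F\to V$ used to reconstruct the lattice splits when the diagram does.

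For the forward direction, I would compute explicitly the diagram of $\Z_p[G/H]$. Writing $u$ for the trivial coset and $w = \sum_{gH \in G/H} gH$ for the norm, Lemma \ref{lemma multiply sums formulas} together with the fact that a subgroup $K\neq H$ of order $p$ acts freely on $G/H$ give $\widehat{H}u = pu$, $\widehat{K}u = w$, and $\widehat{G}u = pw$. Substituting into the formulas for the idempotents yields $e_0 u = w/p$, $e_H u = u - w/p$, and $e_K u = 0$ for all other $K$. Consequently $U_{(0)}$ has rank $1$, $U_{(H)}$ has rank $p-1$, the remaining $U_{(K)}$ are zero, and $U_* = U + \Z_p \cdot w/p$, so that $V = U_*/U \iso \F_p$ with $V_{(0)} = V_{(H)} = V$ and $V_{(K)} = 0$ for $K \notin \{0, H\}$. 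Since $V$ is one-dimensional over $\F_p$ and $G$ is a $p$-group, $G$ acts trivially. Summing over direct summands of a general reduced permutation lattice then yields the claimed diagram conditions.

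For the backward direction, given a diagram $V_*$ satisfying the hypotheses, I would decompose it as a direct sum of one-dimensional sub-diagrams. Fixing a basis $\{v_j^H\}$ of each $V_{(H)}$, for each pair $(H,j)$ I define the sub-diagram $W^{H,j}$ with underlying module $\F_p v_j^H$, $W^{H,j}_{(0)} = W^{H,j}_{(H)} = \F_p v_j^H$, and $W^{H,j}_{(K)} = 0$ for all other $K$. The diagram axioms for $W^{H,j}$ are routine to check (in particular the sum condition $\sum_{i \neq k} W^{H,j}_{(i)} = W^{H,j}$ follows by direct inspection in the three cases $k=0$, $k=H$, $k \notin \{0,H\}$), and the hypothesis $V = V_{(0)} = \bigoplus_H V_{(H)}$ immediately identifies $V_*$ with the direct sum $\bigoplus_{H,j} W^{H,j}$. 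Each $W^{H,j}$ is the diagram of $\Z_p[G/H]$ by the previous computation, so by the Butler correspondence $U \iso \bigoplus_H \Z_p[G/H]^{\dim V_{(H)}}$ is a permutation module.

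The main obstacle is the explicit calculation of the diagram of $\Z_p[G/H]$; once the action of the idempotents on a coset representative and on the coset sum $w$ is correctly worked out, the remainder of the proof is straightforward bookkeeping together with verification of the diagram axioms for the elementary sub-diagrams. A minor subtlety is that the splitting of $V_*$ depends on the choice of bases for the $V_{(H)}$ and so is not canonical, but only existence is needed.
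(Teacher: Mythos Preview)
Your proposal is correct and follows essentially the same approach as the paper: both compute the diagram of the building block $\Z_p[G/H]$ via the idempotent formulas and Lemma~\ref{lemma multiply sums formulas}, then invoke that reduced permutation lattices decompose as direct sums of such blocks and that the Butler correspondence respects direct sums. Your treatment of the backward direction (explicitly splitting $V_*$ into one-dimensional sub-diagrams $W^{H,j}$ and verifying the diagram axioms for each) spells out what the paper compresses into ``The result follows,'' but this is elaboration rather than a different argument.
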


\begin{proof}
A direct calculation shows that the reduced permutation lattice $\Z_p[G/H] = \langle u\rangle$ has diagram
$$(\F_p; \F_p, 0, \hdots, 0, \F_p, 0, \hdots, 0)$$
where the last non-zero entry occurs in the position indexed by $H$: taking $K$ to be any subgroup of order $p$ different from $H$ we have by Lemma \ref{lemma multiply sums formulas}:
\begin{align*}
    e_0\cdot u & = \frac{1}{p^2}\widehat{G}u = \frac{1}{p^2}\widehat{K}\widehat{H}u = \frac{1}{p}\widehat{K}u \\
    e_H\cdot u & = \frac{1}{p^2}(p\widehat{H} - \widehat{K}\widehat{H})u = \frac{1}{p}(p\widehat{1} - \widehat{K})u \\
    e_K\cdot u & = \frac{1}{p^2}(p\widehat{K} - \widehat{H}\widehat{K})u = \frac{1}{p^2}(p\widehat{1} - \widehat{H})\widehat{K}u = \frac{1}{p^2}(p\widehat{1} - p\widehat{1})\widehat{K}u = 0.
\end{align*}
Since $e_0\cdot u + e_H\cdot u = u\in U$, $U_*/U$ has dimension 1 and the claim follows.  A reduced permutation module is a direct sum of modules $\Z_p[G/H]$ for different $H$ of order $p$, and the diagram of a direct sum is the direct sum of the diagrams.  The result follows.
\end{proof}

\begin{lemma}\label{lemma properties of invariants}
\begin{enumerate}
    \item $U^N = U\cap (F_{(0)} + F_{(N)})$
    \item For any subgroup $H\neq N$ of order $p$ we have $I_HU^N\subseteq U\cap F_{(N)}$, with equality if, and only if, $U^N$ is a permutation module.
\end{enumerate}
\end{lemma}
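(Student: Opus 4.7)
The plan for Part 1 is to compute $F^N$ componentwise. Since $U\subseteq F = \bigoplus_{i\in I}F_{(i)}$ and each $F_{(i)}$ is a direct sum of copies of $\Lambda_{(i)}$, it suffices to determine $\Lambda_{(i)}^N$ in each case. For $i=0$ the action of $G$ is trivial. For $i=N$, the identities $n\widehat{N}=\widehat{N}$ and $n\widehat{G}=\widehat{G}$ give $n\cdot e_N = e_N$ for every $n\in N$, so $N$ acts trivially on $\Lambda_{(N)}$. For $i=H\neq N$, $\Lambda_{(H)}^N = 0$ by Lemma~\ref{lemma LambdaH facts}(2). Hence $F^N = F_{(0)}\oplus F_{(N)}$, and intersecting with $U$ yields Part 1. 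The containment in Part 2 then follows immediately: writing $x = x_0 + x_N \in U^N$ via Part 1 and using that $G$ acts trivially on $F_{(0)}$, for any $h\in H$ one has $(1-h)x = (1-h)x_N \in U\cap F_{(N)}$.

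For the equivalence in Part 2, I would exploit the short exact sequence of $\Z_p[G/N]$-modules
$$0\to U\cap F_{(N)} \to U^N \xrightarrow{\pi} \pi(U^N)\to 0$$
induced by the projection $\pi:F_{(0)}\oplus F_{(N)}\to F_{(0)}$; here $\pi(U^N)\subseteq F_{(0)}$ is a trivial $\Z_p[G/N]$-lattice. The technical heart of the argument is to show that $M:=(U\cap F_{(N)})/I_HU^N$ is $\Z_p$-torsion, which I would obtain by comparing $\Q_p$-ranks. On the one hand, the Butler relation $pF\subseteq U$ gives $\Q_p\otimes(U\cap F_{(N)}) = \Q_pF_{(N)} = e_N\Q_pU$. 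On the other, $(\Q_pU)^N = e_0\Q_pU \oplus e_N\Q_pU$, and since $e_N\Q_pG$ is a field in which every $1-h$ with $h\in H\setminus\{1\}$ is a unit, $I_H$ acts as zero on $e_0\Q_pU$ and as the whole field on $e_N\Q_pU$; thus $\Q_p\otimes I_HU^N = e_N\Q_pU$ as well.

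With $M$ torsion in hand, the equivalence becomes clean. Identifying $H$ with $G/N$ (valid since $H\cap N = 1$), Lemma~\ref{lemma UresN perm iff N coinv lattice} says $U^N$ is a permutation module precisely when $U^N/I_HU^N$ is $\Z_p$-torsion-free. If $I_HU^N = U\cap F_{(N)}$, then $U^N/I_HU^N \cong \pi(U^N)$ is a trivial lattice, so $U^N$ is a permutation module. Conversely, if $U^N$ is a permutation module, then the submodule $M$ of the torsion-free quotient $U^N/I_HU^N$ is both torsion and torsion-free, hence zero. The main obstacle in this plan is the rank computation, which requires careful use of both the Butler relation $pF\subseteq U$ and the field structure of $e_N\Q_pG$; the remaining steps are essentially formal manipulation of the decomposition.
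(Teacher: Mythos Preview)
Your proof is correct, and Parts 1 and the inclusion in Part 2 match the paper exactly. For the equivalence in Part 2 you take a genuinely different route to the key technical point that $M=(U\cap F_{(N)})/I_HU^N$ is torsion.

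The paper obtains this in one line, and in fact shows $pM=0$: for $u\in U\cap F_{(N)}$ one has $\widehat{H}u = \widehat{H}e_Nu = 0$ by Lemma~\ref{lemma LambdaH facts}, whence
\[
pu \;=\; pu - \widehat{H}u \;=\; \sum_{h\in H}(1-h)u \;\in\; I_HU^N.
\]
Your argument instead tensors with $\Q_p$ and compares ranks, using $pF\subseteq U$ to identify $\Q_p\otimes(U\cap F_{(N)})$ with $\Q_pF_{(N)}=e_N\Q_pU$, and the field structure of $e_N\Q_pG$ to see that $1-h$ acts invertibly there. This is valid, but it is heavier machinery for the same conclusion, and it yields only ``torsion'' rather than the sharper ``killed by $p$''. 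The remaining steps (the short exact sequence with $\pi(U^N)\subseteq F_{(0)}$ torsion-free, and the appeal to Lemma~\ref{lemma UresN perm iff N coinv lattice} via the identification $H\cong G/N$) are essentially the same as the paper's, just phrased slightly differently. In short: your approach works and is more conceptual, but the paper's direct computation is shorter and avoids invoking the field structure of $e_N\Q_pG$ or the containment $pF\subseteq U$.
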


\begin{proof}
\begin{enumerate}
    \item $N$ acts trivially on $F_{(0)}, F_{(N)}$, while $F_{(H)}$ has no $N$-fixed points for $H\neq N$ by Lemma \ref{lemma LambdaH facts}.  Part 1 now follows because $F$ is the direct sum of the $F_{(i)}$.
    
    \item The inclusion follows from Part 1 because $I_HF_{(0)} = 0$.  For any $u\in U\cap F_{(N)}$ we have $\widehat{H}u = \widehat{H}e_Nu = 0$ by Lemma \ref{lemma LambdaH facts} and so
    $$pu = pu - \widehat{H}u = \sum_{h\in H}(1-h)u\in I_HU^N.$$
    If $U^N$ is a permutation module, then $U^N/I_HU^N$ is a lattice by Lemma \ref{lemma UresN perm iff N coinv lattice} and so $u\in I_HU^N$.  On the other hand, if $I_HU^N = U\cap F_{(N)}$ then 
    $$U^N/I_HU^N = U^N/(U^N\cap F_{(N)}),$$
    which is a lattice since $F_{(N)}$ is a summand of $F$, and so $U^N$ is a permutation module by Lemma \ref{lemma UresN perm iff N coinv lattice} again.
\end{enumerate}
\end{proof}

\begin{theorem}\label{prop invariants perm char}
The lattice $U^N$ is a permutation module if, and only if, ${V_{(N)}}^G\subseteq V_{(0)}$.
\end{theorem}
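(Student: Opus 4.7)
The plan is to apply Lemma \ref{lemma properties of invariants}(2), which reduces the theorem to showing that $U\cap F_{(N)} = I_H U^N$ is equivalent to ${V_{(N)}}^G \subseteq V_{(0)}$, for a fixed subgroup $H \neq N$ of order $p$; fix such an $H$, generated by $h$. Set $M = U \cap F_{(N)}$, $W = \{x \in F_{(N)} : I_G x \subseteq U\}$, and write $\pi_N : F \to F_{(N)}$ for the projection onto the $F_{(N)}$-summand. By Lemma \ref{lemma LambdaH facts}(3), the element $\pi := (h-1)e_N$ is a uniformizer of the DVR $\Lambda_{(N)}$, so $I_H$ acts on any $\Lambda_{(N)}$-module as multiplication by $\pi$.

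I would first record three preliminary identifications.  (a) $W/M \cong {V_{(N)}}^G$ via $x \mapsto \overline{x}$, since the $G$-fixed elements of $V_{(N)} = F_{(N)}/M$ are exactly those $\overline{x}$ with $(g-1)x \in U$ for every $g \in G$.  (b) $I_H U^N = \pi\,\pi_N(U^N)$, because $I_H$ annihilates $F_{(0)}$, so each generator $(h-1)u$ with $u \in U^N$ equals $(h-1)\pi_N(u) = \pi\,\pi_N(u)$.  (c) The condition ${V_{(N)}}^G \subseteq V_{(0)}$ is equivalent to $W = \pi_N(U^N)$: for $x \in W \subseteq F_{(N)}$, the assertion $\overline{x} \in V_{(0)}$ means $x = y + u$ with $y \in F_{(0)}$ and $u \in U$; then $u = x - y \in F_{(0)} + F_{(N)}$ forces $u \in U^N$, so $x = \pi_N(x) = \pi_N(u) \in \pi_N(U^N)$.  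The reverse inclusion $\pi_N(U^N) \subseteq W$ holds always (since $G$ acts trivially on $F_{(0)}$, one has $(g-1)u = (g-1)\pi_N(u) \in U \cap F_{(N)}$), so the stated equivalence is immediate.

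The crux is the \emph{unconditional} identity $M = \pi W$.  The inclusion $\pi W \subseteq M$ is immediate, as $\pi w = (h-1)w \in U \cap F_{(N)}$ for any $w \in W$.  For the reverse, the defining property of $F_{(N)}$ in Butler's construction is that $F_{(N)} \twoheadrightarrow V_{(N)}$ induces an isomorphism $F_{(N)}/\pi F_{(N)} \xrightarrow{\sim} V_{(N)}/\pi V_{(N)}$, which forces $M \subseteq \pi F_{(N)}$.  Hence any $z \in M$ may be written $z = \pi x$ with $x \in F_{(N)}$, and setting $\mu_i = 1 + h + \cdots + h^{i-1} \in \Z_p G$ a direct computation gives $(h^i - 1)x = (h-1)\mu_i x = \pi\mu_i x = \mu_i z$, which lies in $U$ since $z \in U$; combined with $I_N x = 0$, this shows $I_G x \subseteq U$, so $x \in W$ and $z = \pi x \in \pi W$.

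Assembling everything: $U^N$ is a permutation module iff $M = I_H U^N$ (Lemma \ref{lemma properties of invariants}(2)) iff $\pi W = \pi\,\pi_N(U^N)$ (by $M = \pi W$ and (b)) iff $W = \pi_N(U^N)$ (cancelling $\pi$ in the torsion-free $\Lambda_{(N)}$-module $F_{(N)}$) iff ${V_{(N)}}^G \subseteq V_{(0)}$ (by (c)).  The main obstacle is establishing $M = \pi W$ unconditionally; once that is in hand, both implications of the theorem reduce to cancellation of the uniformizer in the discrete valuation ring $\Lambda_{(N)}$.
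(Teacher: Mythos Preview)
Your proof is correct and rests on the same ingredients as the paper's: Lemma~\ref{lemma properties of invariants}, the fact that $\Lambda_{(N)}$ is a DVR with uniformizer $(h-1)e_N$ (from Lemma~\ref{lemma LambdaH facts}), and the defining isomorphism $F_{(N)}/\tn{Rad}(F_{(N)})\to V_{(N)}/\tn{Rad}(V_{(N)})$. The organisation differs: the paper handles each implication separately by direct element chasing (taking $\overline{x}\in V_{(N)}^G$ and producing $x_0\in F_{(0)}$, or taking $y\in U\cap F_{(N)}$ and writing $y=(1-c)x$), whereas you isolate the unconditional identity $M=\pi W$ and then reduce both directions simultaneously to cancelling $\pi$ in the torsion-free module $F_{(N)}$. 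Your packaging is arguably cleaner in that it makes the symmetry of the two directions explicit, at the cost of introducing the auxiliary object $W$; the paper's argument is more hands-on but avoids any extra notation. One small remark: in (c) you only spell out the implication $\overline{x}\in V_{(0)}\Rightarrow x\in\pi_N(U^N)$ explicitly, but the converse (needed for the ``if'' direction of the theorem) is the same computation read backwards, so this is not a gap.
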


\begin{proof}
Suppose first that $U^N$ is a permutation module and fix $\overline{x}\in {V_{(N)}}^G$, with lift $x\in F_N$.  Since $(1-c)\overline{x} = 0$, we have $(1-c)x\in U\cap F_{(N)}$.  But by Lemma \ref{lemma properties of invariants}, $U\cap F_{(N)} = I_CU^N$ so by Part 1 of Lemma \ref{lemma properties of invariants} there are $x_0\in F_{(0)}, x_N\in F_{(N)}$ with $x_0+x_N\in U$ and $(1-c)x = (1-c)(x_0 + x_N) = (1-c)x_N$.  As $(1-c)(x-x_N)=0$ and $F_{(N)}$ has no non-zero $C$-fixed points, $x=x_N$, so that $x_0 + x \in U$.  Hence  $\overline{x} = - \overline{x_0}\in V_{(0)}$, as required.

Suppose now that ${V_{(N)}}^G\subseteq V_{(0)}$.  To conclude that $U^N$ is a permutation module, it is enough to check that $U\cap F_{(N)}\subseteq I_CU^N$, by Lemma \ref{lemma properties of invariants}.  So fix $y\in U\cap F_{(N)}$.  Since $U$ is reduced, $y\in \tn{Rad}(F_{(N)}) = I_CF_{(N)}$, by Lemma \ref{lemma LambdaH facts}, so we may write $y = (1-c)x$ with $x\in F_{(N)}$.  Since $(1-c)\overline{x} = 0$, $\overline{x}\in {V_{(N)}}^G$, so in $V_{(0)}$ by hypothesis.  Thus there is $x_0\in F_{(0)}$ with $\overline{x_0} = \overline{x}$, hence $x - x_0\in U^N$ and so $y = (1-c)(x-x_0)\in I_CU^N$ as required.
\end{proof}

\begin{lemma}\label{lemma UN lattice upstairs char}
We have
$$I_NU\subseteq U \cap \Big(\bigoplus_{H\neq N}F_{(H)}\Big)$$
with equality if, and only if, $U_N$ is a lattice.
\end{lemma}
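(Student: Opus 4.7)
The plan is to handle the inclusion by direct inspection of how $I_N$ acts on the summands of $F$, and then to treat the equivalence by fitting $U_N$ into a short exact sequence whose outer terms are easily controlled, reducing the problem to a $\Z_p$-rank comparison.

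For the inclusion, I will observe that $G$ acts trivially on $F_{(0)}$ and, by the direct computation $ne_N = e_N$ for $n\in N$, that $N$ also acts trivially on $F_{(N)}$. Hence $I_N$ annihilates $F_{(0)}\oplus F_{(N)}$, so $I_NF \subseteq \bigoplus_{H\neq N}F_{(H)}$; restricting to $U\subseteq F$ gives $I_NU\subseteq U\cap \bigl(\bigoplus_{H\neq N}F_{(H)}\bigr)$.

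For the equivalence, consider the restriction to $U$ of the projection $\pi:F\twoheadrightarrow F_{(0)}\oplus F_{(N)}$, whose kernel is $U\cap \bigoplus_{H\neq N}F_{(H)}$ and whose image is a submodule of the lattice $F_{(0)}\oplus F_{(N)}$, hence itself a lattice. Combining this with the inclusion above produces a short exact sequence
\[
0\to \frac{U\cap \bigoplus_{H\neq N}F_{(H)}}{I_NU} \to U_N \to \frac{U}{U\cap\bigoplus_{H\neq N}F_{(H)}} \to 0
\]
whose right-hand term is $\Z_p$-torsion-free. Thus $U_N$ is a lattice if and only if the left-hand kernel vanishes, and the task reduces to showing that this kernel is always $\Z_p$-torsion.

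To establish the torsion claim I will compare $\Z_p$-ranks after tensoring with $\Q_p$. Since $V = U_*/U$ is finite we have $\Q_p U = \Q_pF$. On $\Q_pF_{(0)}\oplus\Q_pF_{(N)}$ the subgroup $N$ acts trivially, while by Lemma \ref{lemma LambdaH facts}(2) each $\Q_pF_{(H)}$ with $H\neq N$ has no non-zero $N$-fixed points, so the generator $1-n$ of $I_N$ is injective and hence bijective on this finite-dimensional $\Q_p$-space. Consequently $\Q_p\otimes_{\Z_p}I_NU$ and $\Q_p\otimes_{\Z_p}\bigl(U\cap\bigoplus_{H\neq N}F_{(H)}\bigr)$ both equal $\bigoplus_{H\neq N}\Q_pF_{(H)}$, so the quotient in question has rank zero and is torsion. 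The one step calling for genuine computation rather than formal manipulation is this rank comparison; the rest is bookkeeping.
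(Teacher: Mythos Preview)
Your argument is correct. The inclusion and the ``purity'' direction (that equality forces $U_N$ to be torsion-free) coincide with the paper's. The genuine difference is in showing that the quotient $\bigl(U\cap\bigoplus_{H\neq N}F_{(H)}\bigr)/I_NU$ is torsion. You pass to $\Q_p$ and compare ranks, using that $\Q_pU=\Q_pF$ and that $1-n$ is bijective on $\bigoplus_{H\neq N}\Q_pF_{(H)}$. The paper instead argues elementwise: for $x$ in the right-hand side one has $\widehat{N}x=0$ by Lemma~\ref{lemma LambdaH facts}, whence $px = px - \widehat{N}x = \sum_{n\in N}(1-n)x\in I_NU$, so the quotient is in fact annihilated by $p$. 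The paper's route is shorter and sharper (it gives $p$-torsion, not merely torsion) and avoids the detour through $\Q_pU=\Q_pF$; your route is more structural and would adapt unchanged to situations where such a convenient identity is unavailable.
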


\begin{proof}
The inclusion is clear because $(1-n)(F_{(0)} + F_{(N)}) = 0$.  Suppose that $U_N$ is a lattice and take $x$ in the right hand set.  Since $\widehat{N}x = 0$ by Lemma \ref{lemma LambdaH facts}, $px = \sum_{n\in N} (1-n)x\in I_NU$ and hence $x\in I_NU$ since $U_N$ is a lattice.  On the other hand, the right hand set is a pure submodule of $U$ because $\bigoplus_{H\neq N}F_{(H)}$ is a summand of $F$, and hence if this set is equal to $I_NU$ then $U_N$ is a lattice.
\end{proof}

We need a technical lemma:

\begin{lemma}\label{lemma technical exists map}
Fix $a\in I$ and $B\subseteq I\backslash\{a\}$.  The homomorphism 
\begin{align*}
    \varphi : U\cap \bigoplus_{i\in B\cup\{a\}}F_{(i)} & \to V_{(a)}\cap \sum_{i\in B}V_{(i)} \\
    \sum x_i \quad& \mapsto \quad \overline{x_a}
\end{align*}
is well-defined and surjective.
\end{lemma}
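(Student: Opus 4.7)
The statement is a direct consequence of unwinding the definition of Butler's correspondence, specifically the fact that $U = \Ker(f)$ where $f: F \to V$ is the sum of the surjective maps $f_i: F_{(i)} \to V_{(i)}$, together with the fact that $F = \bigoplus_{i\in I} F_{(i)}$. The plan is to first check well-definedness, and then exhibit an explicit preimage for surjectivity.

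For well-definedness, note that since $F$ is the internal direct sum of the $F_{(i)}$, any element of $U\cap \bigoplus_{i\in B\cup\{a\}} F_{(i)}$ has a unique expression as $\sum x_i$ with $x_i\in F_{(i)}$, so $\varphi$ is unambiguously defined. Clearly $\overline{x_a}\in V_{(a)}$ because $f_a$ maps $F_{(a)}$ into $V_{(a)}$. To see that $\overline{x_a}\in \sum_{i\in B}V_{(i)}$, use that $\sum x_i\in U = \Ker(f)$ to conclude
\[
\overline{x_a} \;=\; -\sum_{i\in B}\overline{x_i}\;\in\; \sum_{i\in B}V_{(i)}.
\]

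For surjectivity, fix $v\in V_{(a)}\cap \sum_{i\in B}V_{(i)}$. Since $f_a:F_{(a)}\to V_{(a)}$ is surjective, pick $y_a\in F_{(a)}$ with $\overline{y_a} = v$. Writing $v = \sum_{i\in B} v_i$ with $v_i\in V_{(i)}$ and using surjectivity of each $f_i$, choose $y_i\in F_{(i)}$ with $\overline{y_i} = v_i$. Set $x_a := y_a$ and $x_i := -y_i$ for $i\in B$. Then
\[
f\Bigl(x_a + \sum_{i\in B} x_i\Bigr) \;=\; \overline{y_a} - \sum_{i\in B}\overline{y_i} \;=\; v - v \;=\; 0,
\]
so $x_a + \sum_{i\in B} x_i\in \Ker(f) = U$, and it obviously lies in $\bigoplus_{i\in B\cup\{a\}}F_{(i)}$, with $\varphi$-image $\overline{x_a} = v$.

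There is essentially no obstacle here; the lemma is a bookkeeping statement about the correspondence. The only mild subtlety is being careful that the unique direct sum decomposition in $F$ makes the assignment $\sum x_i \mapsto \overline{x_a}$ truly well-defined, and that the target is correctly identified using $U = \Ker(f)$.
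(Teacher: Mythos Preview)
Your proof is correct and follows essentially the same approach as the paper's: both verify well-definedness via $\overline{x_a} = -\sum_{i\in B}\overline{x_i}$ from $\sum x_i\in U$, and both establish surjectivity by lifting $v$ and a decomposition of $v$ componentwise through the surjections $f_i:F_{(i)}\to V_{(i)}$ (you use $v=\sum v_i$ and negate the lifts, the paper uses $v=-\sum v_i$; these are the same argument up to sign bookkeeping).
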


\begin{proof}
The element $\overline{x_a}$ is in $V_{(a)}\cap \sum_{i\in B}V_{(i)}$ because $\sum x_i\in U$, and hence $\overline{x_a} = -\sum_{i\in B}\overline{x_i}$.  Given $v\in V_{(a)}\cap \sum_{i\in B}V_{(i)}$, write $v = -\sum_{i\in B}v_i$ and lift $v$ to $x_a$ in $F_{(a)}$ and each $v_i$ to $x_i$ in $F_{(i)}$.  The element $\sum_{i\in B\cup\{a\}}x_i\in U\cap \bigoplus_{i\in B\cup\{a\}}F_{(i)}$ and maps to $v$ by $\varphi$.
\end{proof}

Recall that by Lemma \ref{lemma UresN perm iff N coinv lattice}, $U_N$ is a lattice if, and only if, the restriction $U\res{N}$ of $U$ to $N$ is a permutation module.  Thus the following theorem is also a characterization of those reduced $U$ for which $U\res{N}$ is a permutation module.

\begin{theorem}\label{prop coinvariants lattice char}
The module $U_N$ is a lattice if, and only if
\begin{enumerate}
    \item For each $H\neq N$ of order $p$, $V_{(H)}\cap (\sum_{K\neq N,H}V_{(K)})\subseteq I_NV_{(H)}$, and
    \item $V^N\subseteq V_{(0)} + V_{(N)}$.
\end{enumerate}
\end{theorem}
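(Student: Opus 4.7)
By Lemma \ref{lemma UN lattice upstairs char}, the theorem reduces to showing that the (always valid) inclusion $I_N U \subseteq U \cap \bigoplus_{H \neq N} F_{(H)}$ is an equality if and only if (1) and (2) hold. Throughout I use, with $N = \langle n \rangle$: $I_N M = (1-n) M$ for any $\Z_p N$-module $M$; by Lemma \ref{lemma LambdaH facts}, $\widehat N$ annihilates $F_{(H)}$ and $V_{(H)}$ whenever $H \neq N$ (so $I_N F_{(H)} = (1-n) F_{(H)}$ and similarly for $V_{(H)}$), and $F^N = F_{(0)} + F_{(N)}$.

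For the forward direction, I would treat the two conditions separately. For (1), given $v \in V_{(H)} \cap \sum_{K \neq N, H} V_{(K)}$, apply Lemma \ref{lemma technical exists map} (with $a = H$ and $B$ the remaining order-$p$ subgroups distinct from $N$) to lift $v$ to $y = \sum_{K \neq N} x_K \in U$. Writing $y = (1-n) u$ for some $u = u_0 + u_N + \sum u_K \in U$ and comparing components in $F = \bigoplus F_{(i)}$ yields $v = \overline{x_H} = (1-n) \overline{u_H} \in I_N V_{(H)}$. For (2), any lift $v \in F$ of $\overline v \in V^N$ satisfies $(1-n) v \in U \cap \bigoplus_{H \neq N} F_{(H)} = I_N U$, so $(1-n)(v - u) = 0$ for some $u \in U$; hence $v - u \in F^N = F_{(0)} + F_{(N)}$ and $\overline v \in V_{(0)} + V_{(N)}$.

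The reverse direction is the substantive half. Given $y = \sum_{H \neq N} y_H \in U$, for each $H \neq N$ the image $\overline{y_H}$ lies in $V_{(H)} \cap \sum_{K \neq N, H} V_{(K)}$, so by (1) one writes $\overline{y_H} = (1-n) \overline{z_H}$ with $z_H \in F_{(H)}$. The difference $y_H - (1-n) z_H$ lies in $\ker f_H$, which is contained in $I_N F_{(H)} = (1-n) F_{(H)}$ (because $f_H$ induces an isomorphism on tops, together with Lemma \ref{lemma LambdaH facts}(3)). Hence $y_H \in (1-n) F_{(H)}$, and summing gives $y = (1-n) w$ for some $w \in \bigoplus_{H \neq N} F_{(H)}$. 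To upgrade this to $y = (1-n) u$ with $u \in U$, I would look for $x \in F_{(0)} + F_{(N)}$ with $\overline x = -\overline w$; since $(1-n) x = 0$, the element $u := w + x$ then satisfies both $(1-n) u = y$ and $u \in U$. Such an $x$ exists because $(1-n) \overline w = \overline y = 0$ places $\overline w$ in $V^N$, which is contained in $V_{(0)} + V_{(N)}$ by (2).

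The main technical obstacle is the passage from $\overline{y_H} \in (1-n) V_{(H)}$ to $y_H \in (1-n) F_{(H)}$: a naive lift of $\overline{y_H}$ leaves an error term in $\ker f_H$, and controlling this error requires that $f_H$ behave as a projective cover, together with the explicit description of the radical in Lemma \ref{lemma LambdaH facts}(3). Condition (2) then plays the comparatively clean role of correcting $w$ modulo the $N$-fixed submodule $F_{(0)} + F_{(N)}$.
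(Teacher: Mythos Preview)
Your proposal is correct and follows essentially the same approach as the paper: both directions reduce via Lemma~\ref{lemma UN lattice upstairs char}, invoke Lemma~\ref{lemma technical exists map} for the lifting in condition~(1), and in the converse exploit the top isomorphism $F_{(H)}/\tn{Rad}(F_{(H)})\cong V_{(H)}/\tn{Rad}(V_{(H)})$ together with $\tn{Rad}(F_{(H)}) = I_N F_{(H)}$ to pass from $\overline{y_H}\in I_N V_{(H)}$ to $y_H\in I_N F_{(H)}$, then use condition~(2) to correct by an element of $F_{(0)}+F_{(N)}$. The only cosmetic differences are that the paper argues the radical step directly (rather than via your intermediate $z_H$ and $\ker f_H$) and, in the forward direction for~(2), first reduces to elements of $(\sum_{H\neq N}V_{(H)})^N$.
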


\begin{proof}
Suppose first that $U_N$ is a lattice.  We check the first condition, so fix an element $v\in V_{(H)}\cap (\sum_{K\neq N,H}V_{(K)})$.  By Lemma \ref{lemma technical exists map} there is $u = \sum_{K\neq N}u_K \in U\cap \bigoplus_{K\neq N}F_{(K)}$ such that $\overline{u_H} = v$.  By Lemma \ref{lemma UN lattice upstairs char}, there is  $u' = \sum_{i\in I}u'_i\in U$ such that $u=(1-n)u' = \sum (1-n)u'_i$.  As the sum of the $F_{(i)}$ is direct, we have $u_H = (1-n)u_H'$ and hence
$$v = \overline{u_H} = (1-n)\overline{u_H'}\in I_NV_{(H)}.$$
To show the second condition, it is enough to confirm that $(\sum_{H\neq N}V_{(H)})^N\subseteq V_{(0)} + V_{(N)}$ so fix $v = \sum_{H\neq N}\overline{x_H}\in V^N$ and a lift $\sum_{H\neq N}x_H$ of $v$.  Then $(1-n)\sum_{H\neq N}x_H\in U\cap \bigoplus_{H\neq N}F_{(H)}$, which is $I_NU$ by Lemma \ref{lemma UN lattice upstairs char} and so
$$(1-n)\sum_{H\neq N}x_H = (1-n)\sum_{i\in I}y_i$$
with $\sum y_i\in U$.  As $F_{(H)}$ has no non-zero $N$-fixed points, $x_H = y_H$ for each $H\neq N$.  Hence
$$ V_{(0)} + V_{(N)}\ni -(\overline{y_0} + \overline{y_N}) = \sum_{H\neq N}\overline{y_H} = \sum_{H\neq N}\overline{x_H} = v.$$

Suppose conversely that Conditions $1$ and $2$ hold for $U$.  We will check the conditions of Lemma \ref{lemma UN lattice upstairs char}.  Given $u = \sum x_H\in U\cap \bigoplus_{H\neq N}F_{(H)}$, Lemma \ref{lemma technical exists map} and Condition 1 imply that $\overline{x_H}\in I_NV_{(H)} = \tn{Rad}(V_{(H)})$ for each $H\neq N$ and hence, since $F_{(H)}/\tn{Rad}(F_{(H)})\to V_{(H)}/\tn{Rad}(V_{(H)})$ is an isomorphism, $x_H\in \tn{Rad}(F_{(H)}) = I_NF_{(H)}$.  For each $H\neq N$, write $x_H = (1-n)y_H$ for some $y_H\in F_{(H)}$. Then $(1-n)\sum_{H\neq N}\overline{y_H} = 0$ so that $\sum_{H\neq N}\overline{y_H}\in V^N$ and hence  
$$\sum_{H\neq N}\overline{y_H} = -\overline{y_0} - \overline{y_N}\in V_{(0)}+V_{(N)}$$ 
for some $y_0\in F_{(0)}$ and $y_N\in F_{(N)}$, by Condition 2.  Thus $y = \sum_{i\in I}y_i\in U$ and $$I_NU \ni (1-n)y = \sum_{H\neq N}x_H = u.$$
\end{proof}

\begin{lemma}\label{lemma UN perm upstairs char}
Let $U$ be a reduced lattice such that $U_N$ is a lattice.  Then 
$$I_GU\subseteq U \cap \Big(\bigoplus_{H}F_{(H)}\Big)$$
with equality if, and only if, $U_N$ is a permutation module.
\end{lemma}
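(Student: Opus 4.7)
The plan is to mirror the structure of Lemma \ref{lemma UN lattice upstairs char}, replacing $\widehat{N}$ with $\widehat{G}$ and exploiting the analogous fact that $(U_N)_{G/N} \iso U/I_GU$. The inclusion $I_GU\subseteq U\cap\bigoplus_H F_{(H)}$ is the easy part: since $ge_0=e_0$ for every $g\in G$, we have $I_GF_{(0)}=0$, and writing any $u\in U\subseteq F_{(0)}\oplus\bigoplus_H F_{(H)}$ with respect to this decomposition immediately shows $(1-g)u\in\bigoplus_H F_{(H)}$.

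For the equivalence, the key algebraic input is that $\widehat{G}e_H=0$ for every subgroup $H$ of order $p$, which follows from Lemma \ref{lemma multiply sums formulas} exactly as in Lemma \ref{lemma LambdaH facts}(1). Consequently $\widehat{G}$ annihilates $\bigoplus_H F_{(H)}$, and the identity $p^2=\widehat{G}+\sum_{g\in G}(1-g)$ in $\Z_pG$ yields $p^2 x\in I_GU$ for every $x\in U\cap\bigoplus_H F_{(H)}$. This reduces the equality to the question of whether $U/I_GU$ is torsion-free.

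Now the forward direction goes as follows: if $U_N$ is a permutation module, then applying Lemma \ref{lemma UresN perm iff N coinv lattice} to $U_N$ viewed as a lattice for the cyclic group $G/N$ of order $p$ shows that $(U_N)_{G/N}\iso U/I_GU$ is a lattice; in particular torsion-free, so $p^2x\in I_GU$ forces $x\in I_GU$.  Conversely, assuming equality, the quotient
$$U/I_GU \;=\; U\Big/\Big(U\cap\bigoplus_H F_{(H)}\Big)$$
embeds into $F/\bigoplus_H F_{(H)}\iso F_{(0)}$, which is $\Z_p$-free. So $U/I_GU$ is torsion-free, hence a lattice, and Lemma \ref{lemma UresN perm iff N coinv lattice} delivers that $U_N$ is a permutation module.

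I don't foresee a real obstacle, as the proof is a direct structural analogue of Lemma \ref{lemma UN lattice upstairs char}; the only point warranting care is the standard identification $(U_N)_{G/N}\iso U_G$, coming from $I_N\subseteq I_G$, which is what allows Lemma \ref{lemma UresN perm iff N coinv lattice} to be applied at the level of $G/N$.
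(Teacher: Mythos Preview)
Your proof is correct and follows essentially the same route as the paper: both establish the easy inclusion via $I_GF_{(0)}=0$, both reduce the equivalence to whether $U/I_GU$ is torsion-free by invoking Lemma \ref{lemma UresN perm iff N coinv lattice} for the cyclic group $G/N$, and both treat the converse by observing that $U\cap\bigoplus_H F_{(H)}$ is pure in $U$. The only cosmetic difference is in the forward direction: the paper applies $(p-\widehat{K})$ for each order-$p$ subgroup $K$ and sums over $K$ to obtain a $p$-power multiple of $u$ in $I_GU$, whereas your single identity $p^2=\widehat{G}+\sum_{g\in G}(1-g)$ together with $\widehat{G}e_H=0$ gives $p^2u\in I_GU$ in one stroke --- arguably the cleaner of the two.
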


\begin{proof}
The inclusion is clear because $(1-g)F_{(0)}=0$ for every $g\in G$.  Since $U_N$ is a lattice, Lemma \ref{lemma UresN perm iff N coinv lattice} says that $(U_N)_C = U/I_GU$ is a lattice if, and only if, $U_N$ is a permutation module.  Suppose that $U_N$ is a permutation module and fix $u = \sum_{H}x_H\in U \cap \bigoplus_{H}F_{(H)}$.  By Lemma \ref{lemma multiply sums formulas}, $(p\cdot 1 - \widehat{K})u = \sum_{H\neq K}px_H\in I_GU$, and hence, summing over the subgroups of order $p$ we get that $\sum_H p^px_H = p^pu\in I_GU$, and hence $u\in I_GU$ since $U/I_GU$ is a lattice.  On the other hand, the right hand submodule is pure in $U$ because $\bigoplus_H F_{(H)}$ is a summand of $F$, so if the sets are equal then $I_GU$ is pure in $U$ and $U_N$ is a permutation module.
\end{proof}

\begin{remark}
Lemma \ref{lemma UN perm upstairs char} is false without the condition that $U_N$ be a lattice:  the given equality applies for the $\Z_2[C_2\times C_2]$-lattice $U$ whose diagram has $V = \F_2\oplus \F_2, V_{(0)} = \langle (1,1)\rangle, V_{(N)} = 0, V_{(C)} = \langle(1,0)\rangle$ and $V_{(\langle nc\rangle)} = \langle(0,1)\rangle$.  Here $U_N$ is not a lattice.

\end{remark}

\begin{theorem}\label{prop coinvariants perm char}
Let $U$ be a reduced lattice such that $U_N$ is a lattice.  Then $U_N$ is a permutation module if, and only if
$V_{(H)} \cap \sum_{K\neq H}V_{(K)}\subseteq I_GV_{(H)}$ for every subgroup $H$ of order $p$.
 
\end{theorem}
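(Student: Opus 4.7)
The approach is to use Lemma \ref{lemma UN perm upstairs char} to recast the permutation property of $U_N$ as an equality of submodules of $F$: since $I_GU\subseteq U\cap \bigoplus_H F_{(H)}$ is always given, I need only trade the reverse inclusion against the stated diagrammatic condition in each direction.  In both directions, the bridge between the ambient lattice and the diagram is Lemma \ref{lemma technical exists map}, together with the elementary identity $I_GU = (1-n)U + (1-c)U$ (using $I_G = (1-n)\Z_pG + (1-c)\Z_pG$).

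For the forward direction I would assume $U_N$ is a permutation module and fix $v\in V_{(H)}\cap \sum_{K\neq H}V_{(K)}$ for some order-$p$ subgroup $H$.  Lemma \ref{lemma technical exists map} (with $a = H$ and $B$ the remaining subgroups of order $p$) lifts $v$ to an element $u = \sum_K x_K\in U\cap \bigoplus_K F_{(K)}$ with $\overline{x_H} = v$.  By Lemma \ref{lemma UN perm upstairs char} this $u$ lies in $I_GU$, so it can be written $u = (1-n)u_1 + (1-c)u_2$ with $u_1,u_2\in U$; reading off the $F_{(H)}$-components and passing to $V_{(H)}$ yields $v\in I_GV_{(H)}$.

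For the reverse direction I would fix $u = \sum_H x_H\in U\cap \bigoplus_H F_{(H)}$ and aim to show $u\in I_GU$.  Applying Lemma \ref{lemma technical exists map} with $a = N$ places $\overline{x_N}$ in $V_{(N)}\cap \sum_{H\neq N}V_{(H)}$, which by the hypothesis (case $H=N$) lies in $I_GV_{(N)} = (1-c)V_{(N)} = \tn{Rad}(V_{(N)})$, using Part 3 of Lemma \ref{lemma LambdaH facts} with $K=N$, $H=C$.  Since $F_{(N)}\to V_{(N)}$ induces an isomorphism of tops, $x_N = (1-c)z_N$ for some $z_N\in F_{(N)}$.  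The diagram axiom $\sum_{i\neq N}V_{(i)} = V$ then permits me to write $\overline{z_N} = \sum_{i\neq N}\overline{w_i}$ with $w_i\in F_{(i)}$, so that $u' := z_N - \sum_{i\neq N}w_i$ lies in $U$ with $F_{(N)}$-component exactly $z_N$.  Consequently $u - (1-c)u'\in U$ has vanishing $F_{(0)}$- and $F_{(N)}$-components, so by Lemma \ref{lemma UN lattice upstairs char} (which applies because $U_N$ is a lattice) it belongs to $U\cap \bigoplus_{H\neq N}F_{(H)} = I_NU$, giving $u\in (1-c)U + I_NU\subseteq I_GU$.

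I expect the construction of $u'$ in the reverse direction to be the main obstacle: once I can produce an element of $U$ whose $F_{(N)}$-component is the prescribed $z_N$, the element $(1-c)u'\in I_CU$ absorbs $x_N$ and reduces the problem to the already-characterized equality of Lemma \ref{lemma UN lattice upstairs char}.  It is worth noting that only the $H=N$ instance of the stated condition appears to be used in the sufficiency argument; the remaining instances are then automatically forced by the forward direction, so the ostensibly symmetric-looking statement is in fact equivalent to its $H=N$ case.
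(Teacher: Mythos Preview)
Your proof is correct and follows the same overall architecture as the paper's: Lemma~\ref{lemma UN perm upstairs char} reduces the question to showing $U\cap\bigoplus_H F_{(H)} = I_GU$, and in the sufficiency direction one peels off the $F_{(N)}$-component via an element of $I_CU$ built using the diagram axiom $\sum_{i\neq N}V_{(i)}=V$, leaving a remainder in $U\cap\bigoplus_{H\neq N}F_{(H)}$ that Lemma~\ref{lemma UN lattice upstairs char} places in $I_NU$.  Your $(1-c)u'$ is exactly the paper's element $w$, and your $u-(1-c)u'$ is (the negative of) the paper's $w'$.

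The one substantive difference is that the paper begins the reverse direction by invoking the hypothesis for \emph{every} $H$ to write $u = (1-c)y_N - (1-n)\sum_{K\neq N}y_K$, whereas you use only the case $H=N$ to obtain $x_N=(1-c)z_N$ and never touch the other components individually.  Your observation is correct: once $x_N$ is absorbed into $(1-c)u'$, the residual lands in $U\cap\bigoplus_{H\neq N}F_{(H)}$ and is handled wholesale by Lemma~\ref{lemma UN lattice upstairs char}, so the instances $H\neq N$ of the hypothesis are not actually needed for sufficiency.  This is a mild but genuine streamlining of the paper's argument, and your closing remark that the symmetric-looking condition is equivalent to its $H=N$ case (given that $U_N$ is a lattice) is a worthwhile corollary the paper does not state.
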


\begin{proof}
Suppose that $U_N$ is a permutation module.  Given an element $\overline{x_H} = \sum_{K\neq H}\overline{x_K}$ in $V_{(H)} \cap \sum_{K\neq H}V_{(K)}$,
then $x_H - \sum_{K\neq H}x_K \in U \cap \Big(\bigoplus_{H}F_{(H)}\Big)$, hence in $I_GU\subseteq I_GF$ by Lemma \ref{lemma UN perm upstairs char}.  Thus $x_H\in I_GF_{(H)}$ and hence $\overline{x_H}\in I_GV_{(H)}$.

\medskip

Suppose now that $V_{(H)} \cap \sum_{K\neq H}V_{(K)}\subseteq I_GV_{(H)}$ for every $H$.  By Lemma \ref{lemma UN perm upstairs char}, to prove that $U_N$ is a permutation module, it is enough to consider an element $u\in U\cap \bigoplus_K F_{(K)}$ and check that it is in $I_GU$.  The given inclusion implies that $U\cap (\bigoplus_{K}F_{(K)})\subseteq I_G(\bigoplus_{K}F_{(K)})$, so we can write
$$u = (1-c)y_N - (1-n)\sum_{K\neq N}y_K,
$$
so that $(1-c)\overline{y_N} = (1-n)\sum_{K\neq N}\overline{y_K}$.  But by the definition of a diagram, we have $V = V_{(0)} + \sum_{K\neq N}V_{(K)}$ and so we can write $\overline{y_N} = \overline{x_0} + \overline{x_C} + \sum_{K\neq N,C}\overline{x_K}$, and hence
$$(1-c)\overline{y_N} = (1-c)\sum_{K\neq N,C}\overline{x_K}.$$
Thus the element $w := (1-c)y_N - (1-c)\sum_{K\neq N,C}x_K$ is in $U$, and indeed in $I_CU$ because $y_N - \sum_{K\neq N,C}x_K - x_0 - x_C\in U$ and 
$$w = (1-c)(y_N - x_0 - x_C -  \sum_{K\neq N,C}x_K).$$
On the other hand, since
$$(1-n)\sum_{K\neq N}\overline{y_K} = (1-c)\overline{y_N} = (1-c)\sum_{K\neq N,C}\overline{x_K},$$
the element $w' := (1-n)\sum_{K\neq N}y_K - (1-c)\sum_{K\neq N,C}x_K \in U\cap \bigoplus_{K\neq N}F_{(K)}$, so $w'\in I_NU$ by Lemma \ref{lemma UN lattice upstairs char} since $U_N$ is a lattice.  It follows that $u = w-w'\in I_CU + I_NU = I_GU$, as required.
\end{proof}

\section{Examples for $C_p\times C_p$}

If $p=2$, it follows from Theorem \ref{DM theorem} that a $\Z_2[C_2\times C_2]$-lattice $U$ is a permutation module if, and only if, both $U^N$ and $U_N$ are permutation modules, because every $\F_2[G/N]$-module is a permutation module so that Condition 2 is automatically satisfied.  But this is not the case for odd primes.  Here we present a counterexample for each odd prime $p$, using the results from the previous section.

\begin{example}
Fix an odd prime $p$ and let $G = C_p\times C_p = \langle n\rangle\times \langle c\rangle$.  We construct a diagram for $\Z_pG$.  Let $V$ be the $\F_p[C_p\times C_p]$-module with $\F_p$-basis $v_1, \hdots, v_{p+2}$ and action from $G$ given as follows:
$$n\cdot v_1 = v_1, \quad n\cdot v_2 = v_2, \quad n\cdot v_j = v_1 + v_j\quad\forall j\geqslant 3,$$
$$c\cdot v_1 = v_1, \quad c\cdot v_2 = v_1 + v_2, \quad c\cdot v_3 = v_3,\quad c\cdot v_j = -k_jv_1 + v_j\quad\forall j\geqslant 4,$$
where $k_j\in \{1, \hdots, p-1\}$ is such that $(j-3)k_j = 1$ modulo $p$.  The submodules $V_{(i)}$ are defined as follows:
\begin{align*}
    V_{(0)} & = \langle v_1,\, v_2 - v_3 + v_4,\, (k_{j+1}-k_j)v_2 - v_j + v_{j+1}\hbox{ for } 4\leqslant j\leqslant p+1\rangle,\\
    V_{(N)} & = \langle v_1, v_2\rangle,\\
    V_{(C)} & = \langle v_1, v_3\rangle,\\
    V_{(\langle nc^a\rangle)} & = \langle v_1, v_{a+3}\rangle, \qquad a\in \{1,\hdots,p-1\}
\end{align*}
The tuple $(V ; V_{(i)}\,,\, i\in I)$ is a diagram, 
with corresponding reduced lattice $U$.  Then
\begin{itemize}
    \item $U^N$ is a permutation module by Theorem \ref{prop invariants perm char}, because ${V_{(N)}}^G = \langle v_1\rangle \subseteq V_{(0)}$.
    
    \item $U_N$ is a lattice by Theorem \ref{prop coinvariants lattice char}: the first condition is satisfied because $V_{(H)}\cap \sum_{K\neq N,H}V_{(K)} = \langle v_1\rangle$ and $v_1 = (n-1)v_j$ for any $j\geqslant 3$.  The second condition is satisfied because the element $\sum_{j=1}^{p+2}\lambda_jv_j\in V^N$ if, and only if, $p\mid \sum_{j=3}^{p+2}\lambda_j$, and hence $V^N$ is generated by $v_1, v_2$ and the elements $v_j - v_{j+1}$ for $j\in \{3, \hdots, p+1\}$, which are in $V_{(0)} + V_{(N)}$. 
    
    \item $U_N$ is a permutation module by Theorem \ref{prop coinvariants perm char}: $V_{(H)}\cap \sum_{K\neq H}V_{(K)} = \langle v_1\rangle$ which is in $I_NV_{(H)}$ if $H\neq N$ and, since $v_1 = (c-1)v_2$, is in $I_CV_{(H)}$ if $H = N$.
    
    \item But $U$ is not a permutation module by Theorem \ref{prop U perm char}, since $G$ does not act trivially on $V$.
\end{itemize}
The lattice $U$ has $\Z_p$-rank $p^2 + p - 1$.  In the smallest case, when $p=3$, it has $\Z_3$-rank $11$ and with respect to some $\Z_3$-basis of $U$, the matrices of the actions of $n$ and $c$ respectively, are
$$\begin{psmallmatrix}
1 & 0 & -1 & -1 & -1 & -1 & 0 & 0 & 0 & 0 & 0 \\
0 & 1 & 0 & 0 & 0 & 0 & 0 & 0 & 0 & 0 & 0 \\
0 & 0 & -2 & 0 & 0 & -1 & 1 & 0 & 1 & 0 & 0 \\
0 & 0 & 0 & -2 & 0 & -1 & -1 & 0 & 0 & 1 & 0 \\
0 & 0 & 0 & 0 & -2 & -1 & 0 & 0 & 0 & 0 & 1 \\
0 & 0 & 0 & 0 & 0 & 1 & 0 & 0 & 0 & 0 & 0 \\
0 & 0 & 0 & 0 & 0 & 0 & 1 & 0 & 0 & 0 & 0 \\
0 & 0 & 0 & 0 & 0 & 0 & 0 & 1 & 0 & 0 & 0 \\
0 & 0 & -3 & 0 & 0 & -1 & 1 & 0 & 1 & 0 & 0 \\
0 & 0 & 0 & -3 & 0 & -1 & -1 & 0 & 0 & 1 & 0 \\
0 & 0 & 0 & 0 & -3 & -1 & 0 & 0 & 0 & 0 & 1
\end{psmallmatrix}\hbox{ and }\begin{psmallmatrix}
1 & -1 & 0 & 1 & -1 & 0 & 0 & 0 & 0 & -1 & 0 \\
0 & -2 & 0 & 0 & 0 & 0 & -1 & 1 & 0 & 0 & 0 \\
0 & 0 & 1 & 0 & 0 & 0 & 0 & 0 & 0 & 0 & 0 \\
0 & 0 & 0 & 1 & 0 & 0 & 0 & 0 & 0 & -1 & 0 \\
0 & 0 & 0 & 0 & -2 & -1 & 0 & 0 & 0 & 0 & 1 \\
0 & 0 & 0 & 0 & 0 & 1 & 0 & 0 & 0 & 0 & 0 \\
0 & 0 & 0 & 0 & 0 & 0 & 1 & 0 & 0 & 0 & 0 \\
0 & -3 & 0 & 0 & 0 & 0 & -1 & 1 & 0 & 0 & 0 \\
0 & 0 & 0 & 0 & 0 & 0 & 0 & 0 & 1 & 0 & 0 \\
0 & 0 & 0 & 3 & 0 & 1 & 1 & 0 & 0 & -2 & 0 \\
0 & 0 & 0 & 0 & -3 & -1 & 0 & 0 & 0 & 0 & 1
\end{psmallmatrix}.
$$
\end{example}

\section{Example for $C_2\times C_2\times C_2$}\label{section p equals 2 case}

A similar application of Butler's Correspondence to the group $G = C_2\times C_2\times C_2 = \langle n\rangle \times \langle b\rangle\times \langle c\rangle$ yields the following lattice $U$ for which $U^N$ and $U_N$ are permutation modules, but $U$ is not.  The matrices of $n,b,c$ respectively, are:
$$\begin{psmallmatrix}
1 & 0 & 0 & 0 & 0 & 0 & 0 & 0 & 0 & 0 & 0 \\
0 & 1 & 0 & 0 & 0 & 0 & 0 & 0 & 0 & 0 & 0 \\
0 & 0 & 1 & 0 & 0 & 0 & 0 & 0 & 0 & 0 & 0 \\
0 & 0 & 0 & 1 & 0 & 0 & 0 & 0 & 0 & 0 & 0 \\
0 & 1 & 0 & 0 & 1 & 0 & 0 & 1 & 1 & 1 & 1 \\
0 & 1 & 0 & 0 & 0 & 1 & 0 & 1 & 1 & 1 & 1 \\
0 & -1 & 0 & 0 & 0 & 0 & 1 & -1 & -1 & -1 & -1 \\
0 & -1 & 0 & 0 & 0 & 0 & 0 & -1 & 0 & 0 & 0 \\
0 & 0 & -1 & 1 & 0 & 0 & 0 & 0 & -1 & 0 & 0 \\
0 & -1 & 1 & 0 & 0 & 0 & 0 & 0 & 0 & -1 & 0 \\
0 & 0 & 0 & -1 & 0 & 0 & 0 & 0 & 0 & 0 & -1
\end{psmallmatrix}\,,\,\begin{psmallmatrix}
1 & 0 & 0 & 0 & 0 & 0 & 0 & 0 & 0 & 0 & 0 \\
0 & 1 & 0 & 0 & 0 & 0 & 0 & 0 & 0 & 0 & 0 \\
0 & 0 & 1 & 0 & 0 & 0 & 0 & 0 & 0 & 0 & 0 \\
0 & 0 & 0 & 1 & 0 & 0 & 0 & 0 & 0 & 0 & 0 \\
0 & 0 & 0 & 1 & 0 & 1 & 0 & 0 & 0 & 1 & 1 \\
0 & 1 & -1 & 0 & 1 & 0 & 0 & 0 & 0 & 1 & 1 \\
1 & 0 & 1 & -1 & -1 & -1 & -1 & 0 & 0 & -1 & -1 \\
0 & 0 & 0 & 0 & 0 & 0 & 0 & 1 & 0 & 0 & 0 \\
0 & 0 & 0 & 0 & 0 & 0 & 0 & 0 & 1 & 0 & 0 \\
0 & -1 & 1 & 0 & 0 & 0 & 0 & 0 & 0 & -1 & 0 \\
0 & 0 & 0 & -1 & 0 & 0 & 0 & 0 & 0 & 0 & -1
\end{psmallmatrix}\,,\,\begin{psmallmatrix}
1 & 0 & 0 & 0 & 0 & 0 & 0 & 0 & 0 & 0 & 0 \\
0 & 1 & 0 & 0 & 0 & 0 & 0 & 0 & 0 & 0 & 0 \\
0 & 0 & 1 & 0 & 0 & 0 & 0 & 0 & 0 & 0 & 0 \\
0 & 0 & 0 & 1 & 0 & 0 & 0 & 0 & 0 & 0 & 0 \\
0 & 0 & 0 & 1 & 0 & 0 & 1 & -1 & 0 & -1 & 0 \\
1 & 1 & 0 & 0 & -1 & -1 & -1 & -1 & 0 & -1 & 0 \\
0 & 0 & 0 & -1 & 1 & 0 & 0 & 1 & 0 & 1 & 0 \\
0 & 0 & 0 & 0 & 0 & 0 & 0 & 1 & 0 & 0 & 0 \\
0 & 0 & -1 & 1 & 0 & 0 & 0 & 0 & -1 & 0 & 0 \\
0 & 0 & 0 & 0 & 0 & 0 & 0 & 0 & 0 & 1 & 0 \\
0 & 0 & 0 & -1 & 0 & 0 & 0 & 0 & 0 & 0 & -1
\end{psmallmatrix}.$$

\bibliographystyle{alpha}
\bibliography{references}

\end{document}